%% file: main_arxiv.tex
\documentclass[12pt]{amsart}

\usepackage{geometry}

\input{eng}

\input{eng_theorems}
\input{packages}
\input{preamble}

\input{preamble_extra}

\begin{document}

 \input{mains/norm-draft}

\printbibliography

\end{document}

%% file: eng.tex
\usepackage[english]{babel}
\usepackage[T1]{fontenc}
\usepackage[utf8]{inputenc}


%% file: eng_theorems.tex
\theoremstyle{plain}
    \newtheorem{theorem}{Theorem}[section]
    \newtheorem{lemma}[theorem]{Lemma}
    \newtheorem{proposition}[theorem]{Proposition}
    \newtheorem{corollary}[theorem]{Corollary}

\theoremstyle{definition}

    \newtheorem{example}[theorem]{Example}

\usepackage[
    draft = false,
    unicode = true,
    colorlinks = true,
    allcolors = blue,
    hyperfootnotes = true,
    citecolor = red
]{hyperref}

%% file: packages.tex
\usepackage{amsfonts,amsmath,amssymb,amscd,amsthm,url}
\usepackage[inline]{enumitem}
\usepackage{graphicx,epsf,afterpage}
\usepackage{soul}
\usepackage{multicol}
\usepackage{array}
\usepackage{braket}
\usepackage{epigraph}
\usepackage{rotating}

\usepackage[
backend=biber,
style=numeric-comp,
sorting=none,
giveninits=true
]{biblatex}
\usepackage{csquotes}

\usepackage{xcolor}

\usepackage{ulem}

\usepackage{tikz}
\usetikzlibrary{positioning, calc, intersections, through, arrows, matrix, chains, math}
\usetikzlibrary{shadows}
\usetikzlibrary{graphs}
\usetikzlibrary{graphs.standard}
\usetikzlibrary{patterns}
\usetikzlibrary{decorations.pathreplacing,calligraphy,backgrounds}

%% file: preamble.tex
\newcommand\norm[1]{\ensuremath{\left\lVert#1\right\rVert}}
\newcommand\abs[1]{\ensuremath{\left\lvert#1\right\rvert}}

\renewcommand{\Pr}{\mathrm{P}}

\newcommand{\Lcal}{\mathcal{L}}

\newcommand{\Xcal}{\mathcal{X}}

\newcommand{\R}{\ensuremath{\mathbb{R}}}
\newcommand{\Z}{\ensuremath{\mathbb{Z}}}
\newcommand{\Cx}{\ensuremath{\mathbb{C}}}

\newcommand{\N}{\ensuremath{\mathbb{N}}}

\renewcommand{\geq}{\geqslant}

\renewcommand{\leq}{\leqslant}

\newcounter{mcnt}

\newcounter{wordcnt}

%% file: preamble_extra.tex
\usepackage{comment}
\usepackage{bm}
\usepackage{microtype}
\newcommand{\ps}[1]{\left(#1\right)}

\renewcommand{\set}[1]{\left\{#1\right\}}

\newcommand{\mc}[1]{\mathcal{#1}}
\newcommand{\Ex}{\mathbb{E}}

\setenumerate{leftmargin=*}
\setitemize{leftmargin=*}

%% file: mains/norm-draft.tex
\title{Strong law of large numbers \\ for random semigroups in norm topology}

\author{A.\,E. Komagorov}

\begin{abstract}
    This paper studies the strong law of large numbers for products of random uniformly continuous semigroups in Banach spaces. We propose a novel approach by treating the product of semigroups as a solution to an integral equation and establish a version of the SLLN in norm topology for arbitrary Banach spaces.
\end{abstract}

\thanks{\hspace{-4mm}A.\,E. Komagorov: komagorov.ae@phystech.edu. orcid: 0009-0005-4588-9223? \\
Moscow Institute of Physics and Technology 141701, Institutskii per. 9, Dolgoprudny, Russia}

\maketitle
\thispagestyle{empty}

\noindent \emph{Keywords:}
Random semigroups, strong law of large numbers, norm topology, Bochner integral, random linear operators, Banach spaces, uniformly continuous semigroups, product of semigroups.

\vspace{3mm}
\noindent \emph{MSC 2020:} 
60B12, 
60H25, 
47D06, 
60F15. 

\section{Introduction}

The theory of compositions of random linear operators can nowadays be considered classical \cite{Mehta, Furstenberg, Tutubalin, Skorokhod}. 
Besides the purely theoretical interest, it has applications in random quantum dynamics, where either compositions of random linear operators in Hilbert spaces \cite{Kempe, OSS19} or compositions of random quantum channels \cite{Aharonov-D-Z, Kholevo-qprobstat, GOSS-2022, Pathirana-Schenker, Dhamapurkar-Dahlsten} are considered.

The strong law of large numbers (SLLN) for random variables acting in a Banach space is well studied \cite{Ledoux-Talagrand-1991-probability, Hoffman-Jorgensen-Pisier}.
Distributions of compositions of random linear operators were studied in \cite{Oseledets, Tutubalin, Skorokhod, Pathirana-Schenker}.
In particular, the law of large numbers for compositions of random operators in different operator topologies was studied in \cite{Sakbaev2016-lln, Sakbaev2018-avg, OSS19}.
Most of the results of this type follow the classical approach of the Chernoff product formula \cite{Chernoff-1968, Engel-Nagel-sec3, Galkin-Remizov-2025}, where the convergence of semigroups is typically described in strong operator topology (SOT) and occasionally in weak operator topology (WOT).
For example, in \cite{DzhenzherSakbaev25-SLLN}, the SLLN in SOT was obtained for uniformly continuous random semigroups in uniformly smooth spaces; and in \cite{DzhenzherSakbaev26-unbound}, a generalization was obtained for strongly continuous random semigroups.
However, these and earlier papers only consider uniformly bounded random operators and usually rely on some geometric properties of the underlying Banach space (such as uniform smoothness), which limits the applicability of such results.

In this paper, we present another viewpoint on the product of semigroups by considering it as the solution of an integral equation.
This allows us to derive the SLLN for random semigroups from the SLLN for their generators by employing the standard theorems of calculus for the Bochner integral.
In particular, this approach produces the SLLN in norm topology for all independent, identically distributed (i.i.d.) random uniformly continuous semigroups in an arbitrary Banach space. This directly generalizes earlier results obtained in \cite{DzhenzherSakbaev25-SLLN}.

The paper is organized as follows.
In Section~\ref{s:main}, the necessary notions and main results (Theorem~\ref{t:slln-norm} and Corollary~\ref{cor:slln-norm}) are formulated.
In Section~\ref{s:proof}, the proof of Theorem~\ref{t:slln-norm} is given.
Section~\ref{s:counterexamples} is devoted to counterexamples showing the tightness of the assumptions in Theorem~\ref{t:slln-norm}.
Finally, Section~\ref{s:conclusions} is devoted to some practical considerations and further ideas regarding our results.

\section{Background and main results}\label{s:main}

Below are the conventions adopted in this paper.
\begin{itemize}
    \item
    We consider Banach spaces over the field $\R$ or $\Cx$.
    \item
    We work with a probability space $(\Omega, \sigma_\Omega, \Pr)$ for a $\sigma$-additive probability measure $\Pr$.
    \item We say that some property $S \subseteq \Omega$ holds almost surely if there is $E \in \sigma_{\Omega}$ such that $E \subseteq S$ and $\Pr(E) = 1$ (this clarification addresses the case of a non-complete measure).
    \item
    $\N$ denotes the set of positive integers.
\end{itemize}

For a Banach space $\mathcal{X}$, we say that $x\colon\Omega\to \mathcal{X}$ is a \textbf{\emph{random element}} if it is Bochner measurable. For a Bochner integrable random element $x$, $\Ex x$ denotes the Bochner integral of $x$. A random element $A\colon\Omega\to \Lcal(\Xcal)$ is called a \textbf{\emph{random operator}} with the corresponding \textbf{\emph{random semigroup}} $e^{At}$. See \cite{DzhenzherSakbaev25-SLLN}, \cite[Chapter~2]{DiestelUhl} for some basic properties of random elements and random operators. 

There is a classical result about the strong law of large numbers in Banach spaces:

\begin{theorem}[Kolmogorov--Mourier SLLN {\cite{Mourier1953}, \cite[Corollary~7.10]{Ledoux-Talagrand-1991-probability}}]\label{t:kolm-slln}
    Let \(\Xcal\) be a separable Banach space.
    Let $\set{x_n\colon\Omega\to\Xcal}_{n\in\N}$ be a sequence of i.i.d. Bochner integrable random elements. Then almost surely
    \[
        \sum_{i=1}^n\frac{x_i}{n} \to \Ex x_1.
    \]
\end{theorem}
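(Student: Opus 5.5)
The plan is to reduce to the scalar Kolmogorov SLLN by a finite-dimensional approximation, using separability and Bochner integrability. Set \(m=\Ex x_1\) and \(y_i=x_i-m\). These are i.i.d., Bochner integrable and have mean zero, so it suffices to show that \(\frac1n\sum_{i=1}^n y_i\to 0\) almost surely. Because \(y_1\) is Bochner measurable, it is an almost-sure limit of simple functions, and \(\Ex\norm{y_1-s}\) can be made small over simple \(s\). Separability of \(\Xcal\) is not strictly needed here, but it guarantees that everything lives in a separable closed subspace.

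Fix \(\varepsilon>0\). Choose a Borel measurable map \(\phi\colon\Xcal\to\Xcal\) taking finitely many values with \(\Ex\norm{y_1-\phi(y_1)}<\varepsilon\). To build it, take a dense sequence \((d_k)\) in \(\Xcal\). Let \(\phi_N(x)\) be the nearest point among \(d_1,\dots,d_N\) (first index in case of ties) when that distance is at most \(\norm{x}\), and \(0\) otherwise. Then \(\norm{x-\phi_N(x)}\le\norm{x}\) and \(\phi_N(x)\to x\) pointwise, so dominated convergence gives the required estimate. The key point is to apply the same deterministic map \(\phi\) to every \(y_i\). Then \(z_i=\phi(y_i)\) are i.i.d. simple random elements with values in a finite-dimensional subspace, and \(r_i=y_i-\phi(y_i)\) are i.i.d. real-valued variables \(\norm{r_i}\) with \(\Ex\norm{r_1}<\varepsilon\).

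Next, decompose
\[
\frac1n\sum_{i=1}^n y_i=\frac1n\sum_{i=1}^n\bigl(z_i-\Ex z_1\bigr)+\Ex z_1+\frac1n\sum_{i=1}^n r_i .
\]
Writing \(z_i\) in coordinates of a basis of its finite-dimensional span, the first term tends to \(0\) almost surely by the scalar Kolmogorov SLLN applied to each coordinate. For the middle term, \(\norm{\Ex z_1}=\norm{\Ex z_1-\Ex y_1}\le\Ex\norm{r_1}<\varepsilon\). For the last term, \(\norm{\frac1n\sum r_i}\le\frac1n\sum\norm{r_i}\), which converges almost surely to \(\Ex\norm{r_1}<\varepsilon\), again by the scalar SLLN. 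Hence \(\limsup_n\norm{\frac1n\sum y_i}\le 2\varepsilon\) almost surely. Taking \(\varepsilon=1/k\) and intersecting countably many full-measure events gives the result. The events in question are measurable: they are defined through the real random variables \(\norm{\frac1n\sum y_i}\), and \(y_i\) is measurable by Bochner measurability.

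The main obstacle is the approximation step. The approximant must be a fixed deterministic Borel map applied to \(y_i\), rather than an arbitrary simple function on \(\Omega\), because otherwise the \(z_i\) need not be i.i.d. The construction via \(\phi_N\) above handles this; it only requires that the distribution of \(y_1\) be concentrated on a separable set, which holds here.
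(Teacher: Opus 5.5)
The paper does not prove this statement; it quotes it as a classical result from Mourier and Ledoux--Talagrand, so there is no internal proof to compare against. Your argument is correct and is the standard self-contained derivation from the scalar Kolmogorov SLLN. The three ingredients all check out:
\begin{itemize}
\item The nearest-point maps $\phi_N$ are Borel and finitely valued, satisfy $\norm{x-\phi_N(x)}\le\norm{x}$, and converge pointwise to the identity (including at $x=0$). So dominated convergence applies.
\item Applying one fixed Borel map to every $y_i$ is exactly what keeps $\set{z_i}$ and $\set{\norm{r_i}}$ i.i.d.
\item The three-term decomposition with $\varepsilon=1/k$ and a countable intersection of events closes the argument.
\end{itemize}

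Your route also makes the paper's remark that separability is dispensable concrete. The dense sequence only needs to be dense in a closed separable subspace carrying the law of $y_1$, since $\phi_N(y_1)\to y_1$ is then still almost sure.

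Compared with the cited monograph, where the statement sits inside a general theory of almost sure convergence of normalized i.i.d. sums, your proof is more elementary but tied to the normalization $n$. The remainder is controlled only by the triangle inequality through $\Ex\norm{r_1}$. Hence the argument does not extend to the $n^{1/p}$ Marcinkiewicz--Zygmund refinements mentioned in Section~\ref{s:conclusions}, where Rademacher type of the space must enter.

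One minor technical point concerns your closing measurability claim. The paper allows a non-complete $\Pr$, and Bochner measurability is an almost-everywhere notion. So before calling $\norm{\frac1n\sum_i y_i}$ measurable, replace each $y_i$ by a Borel version that agrees with it on a measurable set of full measure. This is harmless under the paper's convention for ``almost surely''.
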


Actually, it is redundant to require separability of the Banach space in SLLN-type theorems, provided that the random elements are Bochner measurable (which some authors do not assume as part of the definition). This follows from the fact that a Bochner measurable random element is \emph{essentially separably valued}, i.e., takes values in a fixed separable subset of $\Xcal$ almost surely. Indeed, let $E_n \in \sigma_{\Omega}$ and $\mc{S}_n \subseteq \Xcal$ be an event and a separable subset such that $\Pr(E_n) = 1$ and $x_n(E_n) \subseteq \mc{S}_n$. Then, almost surely, all the $x_n$ take values in the closed linear span of $\bigcup_{n\in\N}\mc{S}_n$, which is still separable.

By contrast, it is important that the random elements are identically distributed to guarantee the SLLN in arbitrary Banach spaces. A generalization of the SLLN to independent random elements with bounded second moments requires B-convexity of the Banach space \cite{Beck1962}.

We define the SLLN for random semigroups in three (generally distinct) operator topologies. Let $\Xcal$ be a Banach space and let $\set{A_n\colon\Omega\to\Lcal(\mathcal{X})}_{n\in\N}$ be a sequence of Bochner integrable random operators.

\begin{itemize}
    \item The sequence of random semigroups $\{e^{A_nt}\}_{n \in \N}$ satisfies the \textbf{\emph{SLLN in norm topology}} if almost surely for every $T > 0$
    \[
        \lim_{n\to\infty} \sup\limits_{t \in [0,T]} \norm{e^{A_1t/n} \ldots e^{A_{n}t/n} - e^{\Ex A_1 t/n} \ldots e^{\Ex A_n t/n}} = 0.
    \]
    \item The sequence of random semigroups $\{e^{A_nt}\}_{n \in \N}$ satisfies the \textbf{\emph{SLLN in SOT}} if for every $x \in \Xcal$ almost surely for every $T > 0$
    \[
        \lim_{n\to\infty} \sup\limits_{t \in [0,T]} \norm{\left(e^{A_1t/n} \ldots e^{A_{n}t/n} - e^{\Ex A_1 t/n} \ldots e^{\Ex A_n t/n}\right)x} = 0.
    \]
    \item The sequence of random semigroups $\{e^{A_nt}\}_{n \in \N}$ satisfies the \textbf{\emph{SLLN in WOT}} if for every $x \in \Xcal$ and $f \in \Xcal^*$ almost surely for every $T > 0$
    \[
        \lim_{n\to\infty} \sup\limits_{t \in [0,T]} \abs{f\left(\ps{e^{A_1t/n} \ldots e^{A_{n}t/n} - e^{\Ex A_1 t/n} \ldots e^{\Ex A_n t/n}}x\right)} = 0.
    \]
\end{itemize}

The earlier papers usually reverse the order of quantifiers ``for all $T > 0$'' and ``almost surely''. These formulations are equivalent, which follows from continuity of the probability and the fact that supremum over the segment $[0, T]$ is monotone in $T$.

Also, it natural to inquire whether the SLLN for random semigroups survives reversing the order of composition of semigroups. In the setting of Theorem \ref{t:slln-norm}, the reversed SLLN can indeed be established using the same proof strategy. However, in general, the two convergences are not equivalent, as shown in Example \ref{ex:order}.

In \cite{DzhenzherSakbaev25-SLLN}, it was proved that the SLLN in WOT holds for random semigroups with i.i.d. uniformly bounded generators in all Banach spaces, while the SLLN in SOT was proved only in uniformly smooth spaces.
It was claimed that by Schur's property, the SLLN in SOT holds for $\ell_1$, although $\ell_1$ is not uniformly smooth.
This argument does not work because the quantifiers ``for all $f$'' and ``almost surely'' cannot be interchanged in general.
However, we show that the conclusion is true for all Banach spaces, thus closing \cite[Conjecture~2.5]{DzhenzherSakbaev25-SLLN}. Moreover, we state the result in a more general way, using norm topology instead of SOT.

For a sequence of Bochner integrable random elements $\set{x_n\colon\Omega\to\mathcal{X}}_{n\in\N}$ in a Banach space $\Xcal$, we define three properties for convenience:
\begin{itemize}
    \item Cesàro boundedness (CB for short), which means $\sup_{n \in \N} \frac{1}{n} \sum_{i=1}^n \norm{x_i} < \infty$ almost surely. This property also makes sense for sequences of deterministic elements regarded as constant random elements.
    \item Cesàro boundedness of expectations (CBE): $\sup_{n \in \N} \frac{1}{n} \sum_{i=1}^n \norm{\Ex x_i} < \infty$.
    \item strong law of large numbers (SLLN): $\frac{1}{n} \sum_{i=1}^n (x_i - \Ex x_i) \to 0$ in the norm of $\Xcal$ almost surely.
\end{itemize}

\begin{theorem}[SLLN in norm topology; proved in~Section~\ref{s:proof} below]\label{t:slln-norm}
    Let \(\Xcal\) be a Banach space.
    Let $\set{A_n\colon\Omega\to\Lcal(\mathcal{X})}_{n\in\N}$ be a sequence of Bochner integrable random operators that has properties CB, CBE and SLLN. Then, the sequence of random semigroups $\set{e^{A_nt}}_{n \in \N}$ satisfies the SLLN in norm topology. If, in addition, $\frac{1}{n}\sum_{i=1}^n \Ex A_i$ converges to $A \in \Lcal(\Xcal)$, then almost surely $e^{A_1t/n} \ldots e^{A_nt/n}$ converges to $e^{At}$ uniformly on any segment $[0, T]$.
\end{theorem}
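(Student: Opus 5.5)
Fix $T>0$ and $\omega$ in the almost sure event where CB and SLLN hold. Write $B_i = \Ex A_i$, $S_n(s) = \sum_{i\le ns} A_i/n$ for the ``empirical generator path'' (piecewise: on $[(k-1)/n, k/n)$ the active generator is $A_k$), and similarly for the $B_i$. The plan is to view $U_n(t) = e^{A_1t/n}\cdots e^{A_nt/n}$ as the value at time $1$ of the solution of a linear integral equation. For $s \in [0,1]$ put $U_n(s,t)$ as the product of the first $\lceil ns\rceil$ factors, with the last one only partially run. Then $U_n(\cdot,t)$ solves
\[
U_n(s,t) = I + t\int_0^s U_n(r,t)\, a_n(r)\,dr,
\]
where $a_n(r)=A_k$ for $r\in[(k-1)/n,k/n)$. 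The same holds for $V_n$ with $b_n(r)=B_k$. The Cesàro sums give $\int_0^s a_n = S_n(s)$ exactly at grid points. CB gives $\int_0^1\norm{a_n}\le C$ uniformly in $n$, and CBE does the same for $b_n$. Hence by Grönwall
\[
\sup_{s\le1,\,t\le T}\norm{U_n(s,t)},\ \norm{V_n(s,t)} \le e^{CT}.
\]

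Next I compare the two equations. Subtracting them,
\[
U_n(s)-V_n(s) = t\int_0^s (U_n-V_n)\,b_n\,dr + t\int_0^s U_n(r)\,(a_n-b_n)(r)\,dr .
\]
By Grönwall it suffices to show that $\sup_{s,t}\norm{\int_0^s U_n(r,t)\,(a_n-b_n)(r)\,dr}\to0$. This is the key step and the main obstacle, since we only know that the integral of $d_n=a_n-b_n$ is small, not $d_n$ itself. The tool is integration by parts (Abel summation). Let $D_n(r)=\int_0^r d_n$. The SLLN says $\sum_{i\le k}(A_i-B_i)=o(n)$ uniformly over $k\le n$; the usual argument is that $o(k)$ for large $k$ plus boundedness for small $k$ gives $\max_{k\le n}\norm{\cdot}/n\to0$. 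Hence $\sup_r\norm{D_n(r)}\to0$. Then
\[
\int_0^s U_n\, dD_n = U_n(s)D_n(s) - \int_0^s \partial_rU_n\, D_n\,dr,
\]
with $\partial_r U_n = tU_n a_n$. The right side is bounded by $e^{CT}\norm{D_n}_\infty(1+TC)\to0$, uniformly in $s\le1$ and $t\le T$. Everything here is deterministic for fixed $\omega$ and uses the Bochner integral for $\Lcal(\Xcal)$-valued functions, which is legitimate because $a_n$ is a step function.

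Grönwall then yields $\sup_{t\le T}\norm{U_n(1,t)-V_n(1,t)}\le e^{CT}\cdot o(1)$, which is the SLLN in norm topology. The event does not depend on $T$, since CB and SLLN are $T$-free.

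For the second claim, the same comparison applies to $V_n$ against the constant generator $A$. The function $b_n-A$ has primitive $\frac1n\sum_{i\le ns}B_i - sA$. Cesàro convergence of the $B_i$ together with CBE makes this primitive tend to $0$ uniformly in $s$, by the same small-$k$/large-$k$ argument. Repeating the integration-by-parts step gives $V_n(1,t)\to e^{At}$ uniformly on $[0,T]$. Combining with the first part finishes the proof.
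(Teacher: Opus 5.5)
Your proposal is correct and follows the paper's proof essentially step for step. The shared steps are: the piecewise product as the solution of $P = I + t\int_0^s P\,G\,d\tau$; Grönwall applied to the difference; integration by parts against the primitive $\Delta_n$ of $G_{A,n}-G_{B,n}$; the reduction to $\max_{k\le n}$ of the partial sums; and, for the second claim, comparing $\{\Ex A_n\}$ with the constant sequence $\{A\}$ and then using the triangle inequality.

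One small correction: off the grid, the primitive of a step function is the linear interpolation of its grid values, not $\frac1n\sum_{i\le ns}$. That is exactly why its supremum over $s\in[0,1]$ equals the maximum over the grid points $k/n$, as the paper notes.
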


Theorem \ref{t:kolm-slln} yields the following corollary for i.i.d. random operators:

\begin{corollary}[SLLN for i.i.d. random semigroups]\label{cor:slln-norm}
    Let \(\Xcal\) be a Banach space. For any sequence $\set{A_n\colon\Omega\to\Lcal(\mathcal{X})}_{n\in\N}$ of i.i.d. Bochner integrable random operators, the sequence of random semigroups $\set{e^{A_n t}}_{n \in \N}$ satisfies the SLLN in norm topology. In particular, $e^{A_1t/n} \ldots e^{A_nt/n}$ converges almost surely to $e^{\Ex A_1t}$ uniformly on any segment.
\end{corollary}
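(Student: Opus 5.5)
The statement to prove is Corollary~\ref{cor:slln-norm}. The plan is to reduce it to Theorem~\ref{t:slln-norm} by checking the three hypotheses CB, CBE and SLLN for an i.i.d. sequence, together with the extra hypothesis on the averaged expectations. Throughout, $\Lcal(\Xcal)$ is regarded as a Banach space. It need not be separable, but this causes no trouble: as remarked after Theorem~\ref{t:kolm-slln}, Bochner measurability lets us restrict to a separable closed subspace of $\Lcal(\Xcal)$ that contains all the $A_n$ almost surely and is closed under the operations we use.

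\emph{SLLN.} Since the $A_n$ are i.i.d. and Bochner integrable, $\Ex A_n = \Ex A_1$ for all $n$. Theorem~\ref{t:kolm-slln}, applied in $\Lcal(\Xcal)$ (or in the separable subspace above), gives $\frac1n\sum_{i=1}^n A_i \to \Ex A_1$ almost surely. This is exactly $\frac1n\sum_{i=1}^n (A_i-\Ex A_i)\to 0$ almost surely.

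\emph{CB.} The real random variables $\norm{A_n}$ are i.i.d. because they are measurable functions of i.i.d. elements. They are integrable, since $\Ex\norm{A_1}<\infty$ by Bochner integrability. The scalar Kolmogorov SLLN, which is Theorem~\ref{t:kolm-slln} with $\Xcal=\R$, then yields $\frac1n\sum_{i=1}^n\norm{A_i}\to\Ex\norm{A_1}$ almost surely. A convergent real sequence is bounded, so $\sup_n \frac1n\sum_{i=1}^n\norm{A_i}<\infty$ almost surely.

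\emph{CBE and the limit.} We have $\frac1n\sum_{i=1}^n\norm{\Ex A_i}=\norm{\Ex A_1}$, which is a finite constant. Also $\frac1n\sum_{i=1}^n\Ex A_i=\Ex A_1$ for every $n$, so this sequence converges trivially to $A:=\Ex A_1$. Theorem~\ref{t:slln-norm} therefore applies. It gives the SLLN in norm topology, and almost surely $e^{A_1t/n}\cdots e^{A_nt/n}\to e^{\Ex A_1 t}$ uniformly on every segment $[0,T]$.

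None of these steps is a real obstacle; all the substance is contained in Theorem~\ref{t:slln-norm}. The only point needing care is the measurability bookkeeping. One must check that $\norm{A_n}$ are genuine independent random variables. One must also check that the Kolmogorov--Mourier theorem applies in the possibly non-separable space $\Lcal(\Xcal)$, which is handled by the essential separability remark made after Theorem~\ref{t:kolm-slln}.
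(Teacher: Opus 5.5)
Your proposal is correct and follows the paper's proof essentially step for step: CB from the scalar SLLN applied to the i.i.d. integrable variables $\norm{A_n}$, CBE because all expectations coincide, and the operator SLLN from Theorem~\ref{t:kolm-slln}, with separability supplied by Bochner measurability. You also state explicitly that $\frac1n\sum_{i=1}^n \Ex A_i = \Ex A_1$ converges trivially, which the paper leaves implicit when it invokes the second part of Theorem~\ref{t:slln-norm} for the ``in particular'' clause.
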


\begin{proof}
    We need to check that the hypotheses of Theorem \ref{t:slln-norm} are satisfied, that is, the sequence has the three main properties in this case.
    
    Note that $\set{\norm{A_n}}_{n\in\N}$ are i.i.d. integrable random variables. By the scalar SLLN, almost surely, the sequence $\frac{1}{n}\sum_{i=1}^n \norm{A_i}$ converges to $\Ex \norm{A_1}$; in particular, it is bounded.
    
    The expectations are bounded since the moments are equal.

    The SLLN for random operators follows from Theorem \ref{t:kolm-slln} (as it was said, separability is automatic).
\end{proof}

\section{Proof of Theorem~\ref{t:slln-norm}}\label{s:proof}

In our proof, we consider the Bochner integral over a segment $[a, b]$ (with the Lebesgue measure), which we denote by $\int_a^b x(t) \, dt$. One of the basic properties of the Lebesgue integral generalizes to this case:

\begin{proposition}[Fundamental theorem of calculus for the Bochner integral]\label{l:fundamental}
    Let $\Xcal$ be a Banach space. Let $x \colon [a, b] \to \Xcal$ be continuous on $[a, b]$ and differentiable on $(a, b)$ with $x'$ Bochner integrable on $[a, b]$. Then,
    \[
        \int_a^b x'(t) \,dt = x(b) - x(a).
    \]
\end{proposition}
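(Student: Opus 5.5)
The plan is to reduce to the scalar case through continuous linear functionals and then transfer the conclusion back using the Hahn--Banach theorem.

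\emph{Scalar reduction.} Fix $f \in \Xcal^*$ and set $g = f \circ x \colon [a,b] \to \mathbb{K}$, where $\mathbb{K}$ is $\R$ or $\Cx$. Then $g$ is continuous on $[a,b]$ and differentiable on $(a,b)$, and $g'(t) = f(x'(t))$ because $f$ is bounded and linear.

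\emph{Integrability and commuting $f$ with the integral.} A bounded operator commutes with the Bochner integral. Hence $g' = f\circ x'$ is Lebesgue integrable and
\[
f\Bigl(\int_a^b x'(t)\,dt\Bigr) = \int_a^b g'(t)\,dt .
\]
In the complex case I work with the real and imaginary parts of $g$ separately.

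\emph{Main obstacle.} The key step is the scalar fundamental theorem of calculus for a function that is differentiable everywhere on $(a,b)$ with Lebesgue integrable derivative. This is the classical theorem that a function continuous on $[a,b]$, differentiable on $(a,b)$ (so at every interior point, not merely almost everywhere) with $g' \in L^1$, satisfies $\int_a^b g' = g(b)-g(a)$. See, e.g., Rudin's \emph{Real and Complex Analysis}, Theorem 7.21.

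The everywhere-differentiability is essential here. Without it the Cantor function is a counterexample. The proof is the Vitali--Carathéodory argument: approximate $g'$ from above by lower semicontinuous functions and compare with $g$ on a growing interval.

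I would first apply this on $[a+\varepsilon, b-\varepsilon]$, where the standard statement asks for differentiability on the closed interval. Then I let $\varepsilon \to 0$, using continuity of $g$ at the endpoints and dominated convergence for the integral of $g'$. This gives
\[
f\Bigl(\int_a^b x'\,dt\Bigr) = f(x(b)) - f(x(a)).
\]

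\emph{Conclusion.} Since this identity holds for every $f \in \Xcal^*$, and $\Xcal^*$ separates the points of $\Xcal$ by Hahn--Banach, the two vectors coincide:
\[
\int_a^b x'(t)\,dt = x(b)-x(a).
\]
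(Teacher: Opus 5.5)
Your proposal is correct and follows essentially the same route as the paper: test against $f \in \Xcal^*$, commute $f$ with the Bochner integral, apply the scalar fundamental theorem of calculus for everywhere-differentiable functions with integrable derivative, and conclude by Hahn--Banach. Your extra details fill in steps the paper leaves implicit: the real/imaginary split in the complex case, and the passage to $[a+\varepsilon, b-\varepsilon]$ with dominated convergence to handle differentiability only on the open interval.
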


\begin{proof}
    It is sufficient to show that the equality holds in composition with an arbitrary bounded linear functional. Let $f \in \Xcal^*$. Then, $f \circ x$ is continuous on $[a, b]$ and $(f \circ x)'(t) = f(x'(t))$. Since the Bochner integral commutes with bounded linear operators, we have
    \[
        f\ps{\int_a^b x'(t) \,dt} = \int_a^b f(x'(t))\,dt = \int_a^b (f \circ x)'(t) \,dt.
    \]
    By the Fundamental theorem of calculus for the Lebesgue integral,
    \[
        \int_a^b (f \circ x)'(t) \,dt = f(x(b)) - f(x(a)) = f(x(b) - x(a)).
    \]
    Thus, for all $f \in \Xcal^*$,
    \[
        f\ps{\int_a^b x'(t) \,dt} = f(x(b) - x(a)).
    \]
\end{proof}

\begin{corollary}[Integration by parts]
    Let $\Xcal$ be a Banach space. Let $U, V \colon [a, b] \to \Lcal(\Xcal)$ be continuous on $[a, b]$ and differentiable on $(a, b)$ with $U', V'$ Bochner integrable on $[a, b]$. Then,
    \[
        \int_a^b U(t)V'(t)\,dt = U(b)V(b) - U(a)V(a) - \int_a^b U'(t)V(t)\,dt.
    \]
\end{corollary}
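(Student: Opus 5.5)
The plan is to apply Proposition~\ref{l:fundamental} to the product $W(t) = U(t)V(t)$, which is an $\Lcal(\Xcal)$-valued function, and then split the resulting integral into the two terms of the product rule.

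\textbf{Continuity and differentiability of $W$.} Multiplication $\Lcal(\Xcal)\times\Lcal(\Xcal)\to\Lcal(\Xcal)$ is a bounded bilinear map, since $\norm{ST}\leq\norm{S}\norm{T}$. Hence $W$ is continuous on $[a,b]$. For $t\in(a,b)$ we write
\[
    W(t+h)-W(t) = \bigl(U(t+h)-U(t)\bigr)V(t+h) + U(t)\bigl(V(t+h)-V(t)\bigr).
\]
Dividing by $h$ and using continuity of $V$ at $t$, this gives $W'(t) = U'(t)V(t) + U(t)V'(t)$ in operator norm.

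\textbf{Integrability of each term.} This is the only genuine point to check: we need $U'V$ and $UV'$ to be Bochner integrable on $[a,b]$, which is not automatic for an arbitrary product. Since $U$ is continuous on a compact segment, $M=\sup_{[a,b]}\norm{U(t)}<\infty$, and $U$ is strongly measurable, being a uniform limit of step functions. Since $V'$ is Bochner integrable, it is strongly measurable, i.e.\ an a.e.\ limit of simple functions $s_k$. Then $U s_k'$, where $s_k'$ are step approximations of $U$ paired with $s_k$, converges a.e.\ to $UV'$. Equivalently, the product of two strongly measurable functions is strongly measurable, by continuity of multiplication. So $UV'$ is strongly measurable, and the bound $\norm{U(t)V'(t)}\leq M\norm{V'(t)}$ together with Bochner's criterion shows that it is Bochner integrable. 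The same argument, with $U$ and $V$ interchanged, handles $U'V$.

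\textbf{Conclusion.} Consequently $W'$ is Bochner integrable as a sum of two integrable functions. Proposition~\ref{l:fundamental} then gives
\[
    \int_a^b U'(t)V(t)\,dt + \int_a^b U(t)V'(t)\,dt = U(b)V(b)-U(a)V(a),
\]
using linearity of the Bochner integral. Rearranging yields the stated formula. The measurability/integrability step is the main obstacle, and it is handled by the bounded-times-integrable estimate above.
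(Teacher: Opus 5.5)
Your proposal is correct and matches the paper's approach: the paper states this corollary without proof as an immediate consequence of Proposition~\ref{l:fundamental} applied to $W=UV$ via the product rule $W'=U'V+UV'$, which is exactly your argument. Your check that $U'V$ and $UV'$ are Bochner integrable (the product of strongly measurable functions is strongly measurable, and $\norm{U(t)V'(t)}\leq \sup\norm{U}\cdot\norm{V'(t)}$) fills in a detail the paper leaves implicit. One tidy-up: the sentence about $Us_k'$ is garbled and should simply say that products of simple approximations of $U$ and of $V'$ converge a.e.\ to $UV'$.
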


These properties clearly extend to $x$ (or $U$, $V$) being continuous and piecewise differentiable on $[a, b]$.

We will also refer to a measure-theoretic version of a classical integral inequality:
\begin{theorem}[Grönwall's inequality {\cite[Appendix~5, Theorem~5.1]{EthierKurtz1986}}]
    Let $\mu$ be a (non-negative) Borel measure on a finite interval $[a, b]$, and let $f$ be a non-negative bounded Borel measurable function on $[a, b]$ such that
    \[
        f(t) \leq c + \int_{[a, t)} f(s) \, \mu(ds)
    \]
    for some $c \geq 0$ and all $t \in [a, b]$. Then
    \[
        f(t) \leq ce^{\mu([a, t))}
    \]
    for all $t \in [a, b]$.
\end{theorem}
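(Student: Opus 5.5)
The plan is to prove the inequality by \emph{Picard-type iteration} of the hypothesis, controlling the iterated integrals by volumes of ordered simplices with respect to the product measure $\mu^{\otimes k}$. Fix $t \in [a,b]$. If $\mu([a,t)) = \infty$ the claimed bound is trivial, so assume $\mu([a,t)) < \infty$; then $\mu$ is finite on every $[a,s)$ with $s \leq t$. Let $M = \sup_{[a,b]} f < \infty$, which is finite because $f$ is bounded. Boundedness and Borel measurability of $f$ guarantee that every integral below is finite. They also allow Tonelli's theorem to be applied freely to the non-negative integrands on $[a,t)^k$.

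Substituting the hypothesis into itself $n$ times gives, by induction on $n$,
\[
    f(t) \leq c \sum_{k=0}^{n-1} \mu^{\otimes k}(S_k(t)) + \int_{S_n(t)} f(s_1)\, \mu^{\otimes n}(ds_1 \ldots ds_n),
\]
where
\[
    S_k(t) = \set{(s_1,\ldots,s_k) : a \leq s_1 < s_2 < \ldots < s_k < t}
\]
and $\mu^{\otimes 0}(S_0(t)) = 1$. The inductive step inserts the bound $f(s_1) \leq c + \int_{[a,s_1)} f(r)\,\mu(dr)$ into the remainder term. Tonelli's theorem is then used to rewrite the resulting iterated integral as an integral over $S_{n+1}(t)$, with $r$ playing the role of a new smallest coordinate. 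The strict inequality in the domain $[a,s)$ of the hypothesis is exactly what makes the coordinates strictly ordered. Bounding $f \leq M$ in the remainder yields
\[
    f(t) \leq c \sum_{k=0}^{n-1} \mu^{\otimes k}(S_k(t)) + M \mu^{\otimes n}(S_n(t)).
\]

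The key estimate, which is the main point requiring care when $\mu$ has atoms, is
\[
    \mu^{\otimes k}(S_k(t)) \leq \frac{\mu([a,t))^k}{k!}.
\]
To prove it, I would note that for distinct permutations $\pi$ of $\set{1,\ldots,k}$, the images $\pi(S_k(t))$ under permutation of coordinates are pairwise disjoint. This holds because points of $S_k(t)$ have pairwise distinct coordinates, which are listed in increasing order. Each image has the same $\mu^{\otimes k}$-measure as $S_k(t)$ by the symmetry of the product measure, and all images lie in $[a,t)^k$. Hence
\[
    k!\,\mu^{\otimes k}(S_k(t)) \leq \mu([a,t))^k.
\]
Only an inequality is obtained here, since the diagonals, which may carry positive mass when $\mu$ has atoms, are excluded. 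However, an inequality is all that is needed.

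Combining the two displays,
\[
    f(t) \leq c \sum_{k=0}^{n-1} \frac{\mu([a,t))^k}{k!} + M\frac{\mu([a,t))^n}{n!}.
\]
Letting $n \to \infty$, the remainder term tends to $0$ and the sum tends to $ce^{\mu([a,t))}$, which proves the claim. The only delicate points are the measurability bookkeeping in the inductive use of Tonelli's theorem and the simplex bound in the presence of atoms. I expect the latter to be the main obstacle, and it is resolved by the disjoint-permutation argument above.
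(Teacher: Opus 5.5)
Your proof is correct. The paper gives no proof of this statement; it cites Ethier--Kurtz. Your argument is the standard iteration proof of that result: iterate the hypothesis $n$ times, bound the ordered-simplex masses $\mu^{\otimes k}(S_k(t))$ by $\mu([a,t))^k/k!$, bound the remainder by $\sup f\cdot\mu([a,t))^n/n!$, and let $n\to\infty$. Your permutation argument justifies the factorial bound cleanly in the presence of atoms. The strict ordering produced by the half-open domain $[a,s)$ is exactly what makes the inequality hold, since the discarded diagonals only decrease the left-hand side.

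One small correction: the case $\mu([a,t))=\infty$ is trivial only when $c>0$. For $c=0$ the right-hand side is $0\cdot\infty$, and under the usual convention $0\cdot\infty=0$ the statement is actually false there. For instance, take $\mu(ds)=ds/(s-a)$ and $f=\mathbf{1}_{(a,b]}$: the hypothesis holds with $c=0$, but $f(t)=1$ for $t>a$. So the theorem should be read with $\mu([a,b])<\infty$, which is the situation in the paper's application $\mu(d\tau)=t\|G_{B,n}(\tau)\|\,d\tau$. Under that reading your proof covers every case, and the ``trivial'' remark should simply be replaced by this finiteness assumption.
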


For a general Borel measure, Grönwall's inequality requires that the bounding integral is taken over the interval $[a, t)$ with the endpoint $t$ excluded. This detail is substantial if the measure has atoms; otherwise, integrating over $[a, t]$ instead clearly makes no difference.

The core of the proof of Theorem~\ref{t:slln-norm} lies in the following deterministic lemma:

\begin{lemma}~\label{l:det}
    Let \(\Xcal\) be a Banach space.
    Let $\set{A_n \in \Lcal(\Xcal)}_{n\in\N}$ and $\set{B_n \in \Lcal(\Xcal)}_{n\in\N}$ be two Cesàro bounded sequences such that $\frac{1}{n} \sum_{i=1}^n (A_i - B_i) \to 0$.
    Then for every $T > 0$
    \[
        \lim_{n\to\infty} \sup\limits_{t \in [0,T]} \norm{e^{A_1t/n} \ldots e^{A_{n}t/n} - e^{B_1 t/n} \ldots e^{B_n t/n}} = 0.
    \]
\end{lemma}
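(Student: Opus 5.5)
The plan is to write both products as values at $s=1$ of piecewise-smooth operator-valued functions, express their difference through the fundamental theorem of calculus for the Bochner integral (Proposition~\ref{l:fundamental}), and then integrate by parts once more. The point of the second integration by parts is that only the \emph{averages} of $A_i-B_i$ are small, not the individual terms.

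\textbf{Setup.} Fix $n$ and $t\in[0,T]$. Let $A(s)=A_k$ and $B(s)=B_k$ for $s\in[(k-1)/n,k/n)$. For $s$ in that interval define
\[
U(s)=e^{A_1t/n}\cdots e^{A_{k-1}t/n}e^{A_k t(s-(k-1)/n)},
\qquad
\widetilde V(s)=e^{B_k t(k/n-s)}e^{B_{k+1}t/n}\cdots e^{B_nt/n}.
\]
Thus $U(0)=I$, $U(1)$ is the $A$-product, $\widetilde V(0)$ is the $B$-product and $\widetilde V(1)=I$. Both functions are continuous and piecewise differentiable, with
\[
U'=tUA,\qquad \widetilde V'=-tB\widetilde V.
\]

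\textbf{Step 1 (uniform bounds).} Since $\norm{e^{X}}\le e^{\norm X}$, Cesàro boundedness gives
\[
\norm{U(s)}\le \exp\Bigl(\tfrac{t}{n}\sum_{i\le n}\norm{A_i}\Bigr)\le e^{TC},
\]
and similarly $\norm{\widetilde V(s)}\le e^{TC}$, with $C$ a common Cesàro bound for both sequences. Also $\int_0^1(\norm{A(s)}+\norm{B(s)})\,ds\le 2C$.

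\textbf{Step 2 (Duhamel formula).} Apply Proposition~\ref{l:fundamental}, in its piecewise form, to $s\mapsto U(s)\widetilde V(s)$. This gives
\[
U(1)-\widetilde V(0)=t\int_0^1 U(s)D(s)\widetilde V(s)\,ds,
\qquad D=A-B.
\]

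\textbf{Step 3 (second integration by parts).} The main obstacle is that $D(s)$ itself need not be small. Set $F(s)=\int_0^s D(r)\,dr$, which is continuous and piecewise linear, with $F'=D$ and $F(0)=0$. Integrating by parts (the corollary above, piecewise) gives
\[
t\int_0^1 UF'\widetilde V\,ds
= tU(1)F(1)-t\int_0^1\bigl(tUAF\widetilde V-tUFB\widetilde V\bigr)\,ds .
\]
The whole expression is then bounded by
\[
T\sup_s\norm{F(s)}\,e^{2TC}(1+2TC).
\]
This bound is uniform in $t\in[0,T]$, so it remains to show that $\sup_{s\in[0,1]}\norm{F(s)}\to0$ as $n\to\infty$.

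\textbf{Step 4 ($\sup\norm F\to 0$).} Write $S_k=\sum_{i\le k}D_i$. At grid points $F(k/n)=S_k/n$.

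Given $\varepsilon>0$, choose $K$ with $\norm{S_k}/k<\varepsilon$ for $k\ge K$. Then for $k\ge K$ we get $\norm{S_k}/n\le \norm{S_k}/k<\varepsilon$. For $k<K$ we get $\norm{S_k}/n\le \max_{k<K}\norm{S_k}/n\to0$.

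Between grid points, $F$ deviates from its grid values by at most $\max_{k\le n}\norm{D_k}/n$. This also tends to $0$, because
\[
\frac{D_k}{k}=\frac{S_k}{k}-\frac{k-1}{k}\cdot\frac{S_{k-1}}{k-1}\to0,
\]
and a standard splitting at a fixed index then gives $\max_{k\le n}\norm{D_k}/n\to0$. Hence $\sup_s\norm{F(s)}\to0$, and combined with Step 3 this completes the argument.
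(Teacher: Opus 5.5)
Your proof is correct. It takes a genuinely different route from the paper's in the key reduction step.

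\textbf{The paper's route.} The paper works only with the forward products $P_{A,n}$ and $P_{B,n}$. It writes their difference as a perturbed Volterra equation $D_n=\int_0^s tD_nG_{B,n}\,d\tau+R_n$. Grönwall's inequality then gives $\norm{D_n(1,t)}\le e^{TM}\sup_s\norm{R_n(s,t)}$, and only afterwards does the paper integrate by parts inside $R_n$.

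\textbf{Your route.} You pair the forward $A$-propagator $U$ with the backward $B$-propagator $\widetilde V$. This gives an exact Duhamel identity, $U(1)-\widetilde V(0)=t\int_0^1 UD\widetilde V\,ds$. It is the continuous form of the telescoping identity $\prod a_i-\prod b_i=\sum_k a_1\cdots a_{k-1}(a_k-b_k)b_{k+1}\cdots b_n$. A single three-factor integration by parts (a continuous Abel summation) then finishes the argument. Your citation of the two-factor corollary for the three-factor step is fine: apply it to $U$ and $F\widetilde V$, or apply the fundamental theorem directly to $UF\widetilde V$.

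\textbf{What each route buys.} Yours avoids Grönwall entirely and gives an explicit representation of the difference. The constant is essentially the same: $Te^{2TC}(1+2TC)$ against the paper's $Te^{2TM}(1+TM)$. The paper's route only needs each product to satisfy its own forward integral equation, which matches the viewpoint it advertises, at the cost of invoking Grönwall's inequality.

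\textbf{Two smaller points on Step 4.}
\begin{enumerate}
\item You actually prove that $\norm{S_n}/n\to0$ implies $\max_{k\le n}\norm{S_k}/n\to0$. The paper only asserts this.
\item Your handling of $F$ between grid points, via $\max_{k\le n}\norm{D_k}/n\to0$, is valid but unnecessary. Since $F$ is affine on each $[\frac{k-1}{n},\frac kn]$, $\norm{F}$ is convex there. Hence $\sup_s\norm{F(s)}=\max_k\norm{F(k/n)}$ directly, which is the argument the paper uses.
\end{enumerate}
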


\begin{proof}
    Fix $T > 0$. Let $I_k := (\frac{k-1}{n}, \frac{k}{n})$ and $\overline{I_k} := [\frac{k-1}{n}, \frac{k}{n}]$ for $k=1, \dots, n$.
    For $t \in [0, T]$ and $s \in \overline{I_k}$, define
    \[
        P_{A, n}(s, t) = e^{A_1 t/n} \dots e^{A_{k-1} t/n} e^{A_k t\ps{s - \frac{k-1}{n}}}.
    \]
    Note that
    \[
        \frac{\partial P_{A, n}}{\partial s}(s, t) = t P_{A, n}(s, t) A_k
    \]
    for $s \in I_k$. Then, by the fundamental theorem of calculus for the Bochner integral, $P_{A, n}$ satisfies the following integral equation for a variable $s \in [0, 1]$ and a parameter $t \in [0, T]$:
    \begin{gather*}
       P_{A, n}(s, t) = I + \int_0^s t P_{A, n}(\tau, t) G_{A, n}(\tau) \,d\tau,
    \end{gather*}
    where $G_{A, n}(s) = A_k$ for $s \in I_k$. Similarly, define $G_{B, n}$ and $P_{B, n}$, which satisfies the analogous integral equation.

    Put $D_n(s, t) := P_{A, n}(s, t) - P_{B, n}(s, t)$. Note that $P_{A, n}(1, t) = e^{A_1t/n} \ldots e^{A_{n}t/n}$ and $P_{B, n}(1, t) = e^{B_1t/n} \ldots e^{B_{n}t/n}$, so we need to show that $\norm{D_n(1, t)} \to 0$ uniformly on $[0, T]$. Write $D_n(s, t)$ using the integral representation of $P_{A, n}$ and $P_{B, n}$:
    \[
        D_n(s, t) = \int_0^s t \Big( P_{A, n}(\tau, t) G_{A, n}(\tau) - P_{B, n}(\tau, t) G_{B, n}(\tau) \Big) \,d\tau.
    \]
    Add and subtract $t P_{A, n}(\tau, t) G_{B, n}(\tau)$ inside the integral:
    \[
        D_n(s, t) = \int_0^s t D_n(\tau, t) G_{B, n}(\tau) \,d\tau + \underbrace{\int_0^s t P_{A, n}(\tau, t) (G_{A, n}(\tau) - G_{B, n}(\tau)) \,d\tau}_{:= R_n(s, t)}.
    \]
    Taking norms, we get:
    \[
        \norm{D_n(s, t)} \leq \norm{R_n(s, t)} + \int_0^s t \norm{D_n(\tau, t)} \norm{G_{B, n}(\tau)} \,d\tau.
    \]
    Bound $\norm{R_n(s, t)}$ by its supremum over $s \in [0, 1]$ and apply Grönwall's inequality with respect to the variable~$s$ and the absolutely continuous measure $\mu(d\tau) = t\norm{G_{B, n}(\tau)} d\tau$:
    \[
        \|D_n(1, t)\| \leq \left( \sup_{s \in [0,1]} \|R_n(s, t)\| \right) \exp\left(\int_0^1 t\|G_{B, n}(\tau)\| \,d\tau \right).
    \]
    Since $\{A_n\}$ is CB, there is a constant $M$ such that
    \[
        \int_0^1 \| G_{A, n}(\tau)\| \,d\tau = \frac{1}{n} \sum_{i=1}^n \|A_i\| \leq M.
    \]
    Then, for all $s \in [0,1]$ and $t \in [0,T]$ we have
    \[
        \|P_{A, n}(s, t)\| \leq \exp\left(\frac{t}{n} \sum_{i=1}^n \|A_i\|\right) \leq e^{TM} =: C.
    \]
    Similarly, since $\{B_n\}$ is CB, $\int_0^1\|G_{B, n}(\tau)\|\,d\tau \leq M$ (for sufficiently large $M$). Hence,
    \[
        \|D_n(1, t)\| \leq C \sup_{s \in [0,1]} \|R_n(s, t)\|.
    \]
    Define $\Delta_n(s) := \int_0^s (G_{A, n}(\tau) - G_{B, n}(\tau)) \,d\tau$. Rewrite $R_n(s, t)$ using $d\Delta_n(\tau) = (G_{A, n}(\tau) - G_{B, n}(\tau)) \,d\tau$ and integrate by parts:
    \[
        R_n(s, t) = t \int_0^s P_{A, n}(\tau, t) d\Delta_n(\tau) = t P_{A, n}(s, t) \Delta_n(s) - t \int_0^s \frac{\partial P_{A, n}}{\partial \tau}(\tau, t) \Delta_n(\tau) \,d\tau.
    \]
    Substitute $\frac{\partial P_{A, n}}{\partial \tau}(\tau, t) = t P_{A, n}(\tau, t) G_{A, n}(\tau)$:
    \[
        R_n(s, t) = t P_{A, n}(s, t) \Delta_n(s) - t^2 \int_0^s P_{A, n}(\tau, t) G_{A, n}(\tau) \Delta_n(\tau) \,d\tau.
    \]
    Taking norms and substituting the uniform bounds $\|P_{A, n}\| \leq C$ and $t \leq T$,
    \[
        \|R_n(s, t)\| \leq T C \|\Delta_n(s)\| + T^2 C \int_0^s \|G_{A, n}(\tau)\| \|\Delta_n(\tau)\| \,d\tau.
    \]
    We can factor out the supremum of $\Delta_n$:
    \[
        \|R_n(s, t)\| \leq (TC + T^2CM)\sup_{\tau \in [0, 1]} \|\Delta_n(\tau)\|.
    \]
    Notice that for $\tau = k/n$, we have
    \[
        \Delta_n\ps{\frac{k}{n}} = \frac{1}{n}\sum_{i=1}^k(A_i - B_i).
    \]
    Now we use the last assumption that $\norm{\Delta_n(1)} \to 0$. This implies $\max_{k=1,\ldots,n} \norm{\Delta_n(k/n)} \to 0$. Also, $\Delta_n$ is linear on each interval $\overline{I_k}$, so for any $\tau \in \overline{I_k}$ we can bound the norm of $\Delta_n(\tau)$ by the norm at the endpoints:
    \[
        \norm{\Delta_n(\tau)} \leq \max\ps{\norm{\Delta_n\ps{\frac{k-1}{n}}}, \norm{\Delta_n\ps{\frac{k}{n}}}}.
    \]
    Thus, we have the uniform bound:
    \[
        \sup_{\tau \in [0, 1]}\|\Delta_n(\tau)\| = \max_{k=1,\ldots,n}\norm{\Delta_n\ps{\frac{k}{n}}} \to 0.
    \]
    Combining the obtained estimates, we arrive at the desired convergence:
    \[
        \sup_{t \in [0, T]}\|D_n(1, t)\| \leq C \sup_{t \in [0, T]}\sup_{s \in [0,1]} \|R_n(s, t)\| \leq (TC^2 + T^2C^2M)\sup_{\tau \in [0, 1]} \|\Delta_n(\tau)\| \xrightarrow[n \to \infty]{} 0.
    \]
\end{proof}

\begin{proof}[Proof of Theorem~\ref{t:slln-norm}]
    By hypothesis, there are sets $E_1 \in \sigma_\Omega$ where $\sup_{n \in \N} \sum_{i=1}^n \frac{\norm{A_i}}{n} < \infty$ and $E_2 \in \sigma_\Omega$ where $\sum_{i=1}^n \frac{A_i - \Ex A_i}{n} \to 0$ that have probability~1. Then so does $E := E_1 \cap E_2$. By Lemma \ref{l:det}, it holds pointwise on $E$ that for all $T > 0$
    \[
        \lim_{n\to\infty} \sup\limits_{t \in [0,T]} \norm{e^{A_1t/n} \ldots e^{A_{n}t/n} - e^{\Ex A_1 t/n} \ldots e^{\Ex A_n t/n}} = 0.
    \]
    This means, by definition, that the sequence of random semigroups $\set{e^{A_nt}}_{n \in \N}$ satisfies the SLLN in norm topology. For the additional part of the theorem, apply Lemma \ref{l:det} to sequences $\set{\Ex A_n}_{n \in \N}$ and $\set{A}_{n \in \N}$, and use the triangle inequality.
\end{proof}

\section{Counterexamples}\label{s:counterexamples}

The CB and CBE hypotheses in \ref{t:slln-norm} seem quite restrictive because they force both $e^{A_1t/n} \ldots e^{A_{n}t/n}$ and $e^{\Ex A_1 t/n} \ldots e^{\Ex A_n t/n}$ to be bounded (almost surely). However, we show that none of the three conditions can be discarded.

\begin{example}[SLLN hypothesis in theorem \ref{t:slln-norm} is significant]
    Take a sequence of independent random variables $\set{\xi_n}_{n \in \N}$ such that $\Pr\set{\xi_n = n^2} = n^{-2}$ and $\xi_n = 0$ otherwise. Then, $\Ex \xi_n = 1$, but the Borel--Cantelli lemma implies that $\xi_n \neq 0$ only finitely often almost surely, so $\frac{1}{n} \sum_{i=1}^n \abs{\xi_i} \to 0$ a.s. and $\frac{1}{n} \sum_{i=1}^n (\xi_i - \Ex \xi_i) \to -1$ a.s. (Actually, any sequence with properties CB and CBE but not SLLN would suffice.)
    
    Then, in any non-zero Banach space, the sequence of random semigroups generated by $\set{A_n = \xi_n I}_{n \in \N}$ does not satisfy the SLLN even in WOT:
    \[
        e^{A_1t/n} \ldots e^{A_{n}t/n} - e^{\Ex A_1 t/n} \ldots e^{\Ex A_n t/n} = \ps{e^{\frac{t}{n}\sum_{i=1}^n \xi_i} - e^{\frac{t}{n}\sum_{i=1}^n \Ex \xi_i}}I \to (1 - e^t)I.
    \]
\end{example}

The examples for the other two combinations share a common pattern, so let us introduce it beforehand.

\begin{example}\label{e:3}
    Let $\set{\xi_n, \eta_n}_{n \in \N}$ be two sequences of integrable random variables that satisfy the SLLN but
    \[
        \varlimsup_{n \to \infty} \frac{1}{n^2} \abs{\sum_{1 \leq i < j \leq n} \! (\xi_i\eta_j - \Ex\xi_i\Ex\eta_j)} > 0
    \]
    almost surely (such sequences appear in the examples below).
    Consider any Banach space $\Xcal$ with two non-random operators $U, V \in \Lcal(\Xcal)$ such that $U^2 = V^2 = VU = 0$ but $UV \neq 0$. For example, the following operators in $\R^3$ fit:
    \[
        U \sim
        \begin{pmatrix}
            0 & 1 & 0 \\
            0 & 0 & 0 \\
            0 & 0 & 0
        \end{pmatrix}, \quad
        V \sim
        \begin{pmatrix}
            0 & 0 & 0 \\
            0 & 0 & 1 \\
            0 & 0 & 0
        \end{pmatrix}.
    \]
    Consider the following sequence of random operators: $A_{2k-1} := \eta_kV$ and $A_{2k} := \xi_kU$ for $k \in \N$. Due to the properties of $U$ and $V$, for $n = 2k$ we have:
    \begin{align*}
        e^{A_1t/n} \ldots e^{A_nt/n} - e^{\Ex A_1t/n} \ldots e^{\Ex A_nt/n} &= \prod_{i=1}^k \ps{I + \frac{t}{n}\eta_iV} \! \ps{I + \frac{t}{n}\xi_iU} - \prod_{i=1}^k \ps{I + \frac{t}{n}\Ex\eta_iV} \! \ps{I + \frac{t}{n}\Ex\xi_iU} = \\ &= \frac{t}{n}\sum_{i=1}^k (\xi_i - \Ex\xi_i)U + \frac{t}{n}\sum_{i=1}^k (\eta_i - \Ex\eta_i)V + \frac{t^2}{n^2} \! \sum_{1 \leq i < j \leq k} \! (\xi_i\eta_j - \Ex\xi_i\Ex\eta_j)UV.
    \end{align*}
    Because $\set{\xi_n}$ and $\set{\eta_n}$ satisfy the SLLN, the linear terms vanish almost surely. So, assuming $t > 0$,
    \[
        \varlimsup_{n \to \infty} \norm{e^{A_1t/n} \ldots e^{A_nt/n} - e^{\Ex A_1t/n} \ldots e^{\Ex A_nt/n}} \geq \varlimsup_{k \to \infty} \frac{t^2}{4k^2} \abs{\sum_{1 \leq i < j \leq k} \! (\xi_i\eta_j - \Ex\xi_i\Ex\eta_j)}\norm{UV} > 0 \ a.s.
    \]
    Thus, the SLLN fails for $\set{e^{A_nt}}_{n \in \N}$.
\end{example}

\begin{example}[CBE hypothesis in theorem \ref{t:slln-norm} is significant]\label{e:13}
    Let $\set{\xi_n, \eta_n}_{n \in \N}$ be two sequences of jointly independent random variables such that for all $n$, $\Pr\set{\xi_n = (-1)^nn^2} = n^{-3/2}$, $\xi_n = 0$ otherwise, and $\eta_n$ has the same distribution. Note that $\Ex \xi_n = (-1)^n\sqrt{n}$. By the Borel--Cantelli lemma, $\xi_n = \eta_n = 0$ for all but finitely many $n$ almost surely (which implies boundedness). Denote this event by $E$. On $E$, we have
    \[
        \frac{1}{n}\sum_{k=1}^n (\xi_k - \Ex\xi_k) \sim -\frac{1}{n}\sum_{k=1}^n \Ex\xi_k = -\frac{1}{n}\sum_{k=1}^n (-1)^k\sqrt{k} = O\ps{\frac{\sqrt{n}}{n}} \to 0.
    \]
    The same applies to $\eta_n$. Also, observe that on $E$,
    \[
         \frac{1}{n^2} \! \sum_{1 \leq i < j \leq n} \! (\xi_i\eta_j - \Ex\xi_i\Ex\eta_j) \sim -\frac{1}{n^2} \! \sum_{1 \leq i < j \leq n} \! \Ex\xi_i\Ex\xi_j = -\frac{1}{2}\ps{\frac{1}{n}\sum_{k=1}^n \Ex\xi_k}^2 + \frac{1}{2n^2}\sum_{k=1}^n (\Ex\xi_k)^2 \sim \frac{1}{2n^2}\sum_{k=1}^n k \sim \frac{1}{4}.
    \]
    
    Now, consider the random operators $\set{A_n}_{n \in \N}$ from Example \ref{e:3}. This sequence is bounded almost surely and satisfies the SLLN because $\{\xi_n\}$ and $\{\eta_n\}$ do. Additionally, the operators are independent. However, as the example shows, the sequence $\set{e^{A_nt}}_{n \in \N}$ does not satisfy the SLLN.
\end{example}

\begin{example}[CB hypothesis in theorem \ref{t:slln-norm} is significant]
    Let $\set{\xi_n, \eta_n}_{n \in \N}$ be two sequences of jointly independent random variables such that for all $n$, both $\xi_n$ and $\eta_n$ are distributed as follows:
    \[
        \begin{cases}
            \Pr\set{\xi_n = (-1)^n\sqrt{n}(1 - n^2)} = n^{-2} \\
            \Pr\set{\xi_n = (-1)^n\sqrt{n}} = 1 - n^{-2}
        \end{cases}.
    \]
    Here, $\Ex \xi_n = \Ex \eta_n = 0$, but the Borel--Cantelli lemma ensures that $\xi_n = \eta_n = (-1)^n\sqrt{n}$ for all but finitely many $n$ on a set $E \in \sigma_{\Omega}$ of probability 1 (which is opposite to the previous example). Then, on $E$, we similarly have $\frac{1}{n}\sum_{i=1}^n \xi_i \to 0$, $\frac{1}{n}\sum_{i=1}^n \eta_i \to 0$, and
    \[
         \frac{1}{n^2} \! \sum_{1 \leq i < j \leq n} \! \xi_i\eta_j \sim \frac{1}{n^2} \! \sum_{1 \leq i < j \leq n} \! (-1)^{i+j}\sqrt{ij} = \frac{1}{2}\ps{\frac{1}{n}\sum_{k=1}^n (-1)^k\sqrt{k}}^2 - \frac{1}{2n^2}\sum_{k=1}^n k \sim - \frac{1}{2n^2}\sum_{k=1}^n k \sim -\frac{1}{4}.
    \]
    
    Then, the sequence $\set{A_n}_{n \in \N}$ from Example \ref{e:3} satisfies the SLLN and $\Ex A_n = 0$ for all $n$, but the corresponding sequence of random semigroups does not satisfy the SLLN.
\end{example}

We also show that the direct analog of Theorem \ref{t:slln-norm} for SOT or WOT does not hold in general, even for independent random operators.

\begin{example}\label{ex:sot}
    We construct a sequence of random operators $\set{A_n}_{n \in \N}$ in $\Xcal = \ell_2(\N_0)$ (which can be replaced with any infinite-dimensional Banach space that admits a Schauder basis) with the following properties:
    \begin{itemize}
        \item the random operators are independent;
        \item CB;
        \item CBE;
        \item for all $x \in \Xcal$, the sequence $\set{A_nx}_{n \in \N}$ satisfies the SLLN;
        \item the sequence of random semigroups $\set{e^{A_nt}}_{n \in \N}$ does not satisfy the SLLN in WOT.
    \end{itemize}

    For \(m \in \N\), define \(K_m, R_m \in \Lcal(\Xcal)\) as
    \[
        K_m x := x_m e_0, \quad R_m x := x_0 e_m - x_m e_0.
    \]
    Choose a sequence $\set{n_m}_{m \in \N}$ such that
    \[
        \frac{\sum_{k=1}^{m-1}n_k}{n_m} \to 0, \quad n_m > 2n_{m-1},
    \]
    e.g., $n_m = 2^{m^2}$.
    Let \(\set{\varepsilon_m}_{m \in \N}\) be a sequence of independent Rademacher variables, i.e., $\Pr\set{\varepsilon_m = 1} = \Pr\set{\varepsilon_m = -1} = 1/2$. Define
    \[
        A_{n_m} := n_m \varepsilon_m K_m, \quad A_{2n_m} := n_m R_m,
    \]
    and $A_n := 0$ for other $n$.
    The random operators \(\set{A_n}_{n \in \N}\) are independent since the only non-constant ones depend on distinct $\varepsilon_m$.
    
    We have $\Ex A_{2n_m} = n_mR_m$ and $\Ex A_n = 0$ for other $n$, so the growth rate of $n_m$ gives
    \[
        \sup_{n \in \N} \frac{1}{n} \sum_{i=1}^n \norm{\Ex A_i} = \sup_{m \in \N} \frac{1}{2n_m} \sum_{k=1}^{m} \norm{\Ex A_{2n_k}} = \sup_{m \in \N} \frac{1}{2n_m} \sum_{k=1}^{m} n_k < \infty.
    \]
    Similarly,
    \[
        \sup_{n \in \N} \frac{1}{n} \sum_{i=1}^n \norm{A_i} \leq \sup_{m \in \N} \frac{1}{n_m} \sum_{k=1}^{m} (\norm{A_{n_k}} + \norm{A_{2n_k}}) = \sup_{m \in \N} \frac{2}{n_m} \sum_{k=1}^{m} n_k < \infty.
    \]
    Also, for every $x$ and $n \in [n_m, n_{m+1})$,
    \[
        \frac{1}{n} \norm{\sum_{i=1}^n (A_i-\Ex A_i)x} = \frac{1}{n} \norm{\sum_{k=1}^m n_k \varepsilon_k x_k e_0} \leq \frac{1}{n_m} \ps{\sum_{k=1}^{m-1} n_k\norm{x} + n_m \abs{x_m}} = \frac{1}{n_m} \sum_{k=1}^{m-1} n_k\norm{x} + \abs{x_m} \xrightarrow[n \to \infty]{} 0.
    \]
    So, the weakened assumptions are satisfied.
    
    For $m \geq 2$, denote
    \begin{alignat*}{2}
        &P_n(t) := e^{A_1t/n} \ldots e^{A_{n}t/n}, && \qquad U_m(t) := P_{2n_{m-1}}\!\ps{\frac{n_{m-1}}{n_m}t}, \\
        &Q_n(t) := e^{\Ex A_1 t/n} \ldots e^{\Ex A_n t/n}, && \qquad V_m(t) := Q_{2n_{m-1}}\!\ps{\frac{n_{m-1}}{n_m}t}.
    \end{alignat*}
    I.e., $U_m$ and $V_m$ are the products of the first $2n_{m-1}$ factors in $P_{2n_m}$ and $Q_{2n_m}$ respectively. Then, for $m \geq 2$, the following decomposition holds:
    \[
        P_{2n_m}(t) - Q_{2n_m}(t) = U_m(t) e^{\varepsilon_m K_m t/2} e^{R_m t/2} - V_m(t) e^{R_m t/2}.
    \]
    Since $K_m^2 = 0$, the factor $e^{\varepsilon_m K_m t/2}$ reduces to $I + \varepsilon_m K_m t/2$, so we can group the terms as
    \[
        P_{2n_m}(t) - Q_{2n_m}(t) = \ps{U_m(t) - V_m(t)} e^{R_m t/2} + \frac{t}{2} \varepsilon_m U_m(t) K_m e^{R_m t/2}.
    \]
    From the mean value inequality in Banach spaces, we get
    \[
        \norm{U_m(t) - I} \leq \frac{t}{2n_m} \ps{\sum_{i=1}^{2n_{m-1}} \norm{A_i}} \prod_{i=1}^{2n_{m-1}} e^{\norm{A_i} t/(2n_m)} = \frac{t}{n_m} \ps{\sum_{k=1}^{m-1} n_k} e^{(t/n_m) \sum_{k=1}^{m-1} n_k} \xrightarrow[m \to \infty]{} 0,
    \]
    and
    \[
        \norm{V_m(t) - I} \leq \frac{t}{2n_m} \ps{\sum_{i=1}^{2n_{m-1}} \norm{\Ex A_i}} \prod_{i=1}^{2n_{m-1}} e^{\norm{\Ex A_i} t/(2n_m)} = \frac{t}{2n_m} \ps{\sum_{k=1}^{m-1} n_k} e^{t/(2n_m) \sum_{k=1}^{m-1} n_k} \xrightarrow[m \to \infty]{} 0.
    \]
    Now, it is easy to see that the SLLN in WOT fails on the vector $e_0$ and the coordinate functional $f_0$. Indeed, note that
    \[
        e^{R_m t/2} e_0 = \cos(t/2)e_0 + \sin(t/2)e_m.
    \]
    So, using the triangle inequality, the estimates above and the fact that $\|K_m\| = \|R_m\| = 1$, we can write
    \[
        f_0\ps{\ps{P_{2n_m}(t) - Q_{2n_m}(t)}e_0} \sim \frac{t}{2} \varepsilon_m f_0\ps{K_m e^{R_m t/2} e_0} = \frac{t}{2} \varepsilon_m \sin(t/2).
    \]
    So, for $t \notin 2\pi\Z$, the convergence in WOT fails. 
\end{example}

Lastly, we show that the order of composition in the definition of the SLLN for random semigroups is significant in general.

\begin{example}\label{ex:order}
     Let $\Xcal = \R^2$. Take linear operators $A \colon (x, y) \mapsto (y, 0)$ and $B \colon (x, y) \mapsto (x, 0)$ on $\Xcal$. Set $A_1 = \varepsilon A$ for $\varepsilon$ a Rademacher random variable and $A_n = (n-1)B$ for $n \geq 2$. Then,
\[
    e^{A_1t/n} \ldots e^{A_{n}t/n} - e^{\Ex A_1 t/n} \ldots e^{\Ex A_n t/n} = \ps{e^{\varepsilon At/n} - e^{\Ex\varepsilon At/n}} e^{\sum_{k=2}^n B(k-1)t/n} = \frac{t}{n}\varepsilon Ae^{Bt(n-1)/2} =: D_n(t).
\]
We have $D_n(t)(x, y) = \frac{t}{n}\varepsilon A(e^{t(n-1)/2}x, y) = \frac{t}{n}\varepsilon(y, 0)$, so $\sup_{t \in [0, T]} \norm{D_n(t)} \to 0$. However, the reversed order of composition gives
\[
    e^{A_nt/n} \ldots e^{A_{1}t/n} - e^{\Ex A_n t/n} \ldots e^{\Ex A_1 t/n} = e^{\sum_{k=2}^n B(k-1)t/n} \ps{e^{\varepsilon At/n} - e^{\Ex\varepsilon At/n}} = \frac{t}{n}\varepsilon e^{Bt(n-1)/2}A =: E_n(t).
\]
Here, $E_n(t)(x, y) = \frac{t}{n}\varepsilon e^{Bt(n-1)/2}(y, 0) = \frac{t}{n}\varepsilon(e^{t(n-1)/2}y, 0)$, so $\sup_{t \in [0, T]} \norm{E_n(t)} \to \infty$ for all $T > 0$.

For a counterexample to the other implication, define $A \colon (x, y) \mapsto (0, x)$ instead.
\end{example}

\section{Remarks and further research}\label{s:conclusions}

The proof of Theorem \ref{t:slln-norm} actually gives the following estimate for some $C = C(T, \omega)$:
\[
    \sup\limits_{t \in [0,T]} \norm{e^{A_1t/n} \ldots e^{A_{n}t/n} - e^{\Ex A_1 t/n} \ldots e^{\Ex A_n t/n}} \leq \frac{C}{n} \max_{k=1,\ldots,n} \norm{\sum_{i=1}^k (A_i - \Ex A_i)}.
\]
Thus, the left-hand side automatically inherits any asymptotic upper bound for the right-hand side. For the latter, there is a classical result due to de Acosta \cite{deAcosta1981} and independently to Azlarov and Volodin \cite{AzlarovVolodin1981}, which was initially proved for random variables by Marcinkiewicz and Zygmund \cite{MarcinkiewiczZygmund1937}:

\begin{theorem}
    Let $\Xcal$ be a separable Banach space and let $p \in [1, 2)$. The following conditions are equivalent:
    \begin{enumerate}
        \item $\Xcal$ is of Rademacher type $p$ (see \cite{Ledoux-Talagrand-1991-probability} for the definition);
        \item for any sequence of i.i.d. $\Xcal$-valued random elements $\set{x_n}_{n \in \N}$ with $\Ex x_n = 0$ and $\Ex\norm{x_n}^p < \infty$, $\frac{1}{n^{1/p}}\sum_{i=1}^n x_i \to 0$ almost surely.
    \end{enumerate}
\end{theorem}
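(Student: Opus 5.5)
Since this is a classical theorem quoted from \cite{deAcosta1981, AzlarovVolodin1981}, I would follow the standard route in each direction. Throughout, I use that separability lets one work with essentially separably valued elements, exactly as remarked after Theorem~\ref{t:kolm-slln}. For $p=1$ every Banach space has type $1$, and (2) is then just Theorem~\ref{t:kolm-slln}. So the substance is $p\in(1,2)$.

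For \emph{(1)$\Rightarrow$(2)} I would use truncation and Kronecker's lemma.
\begin{enumerate}
\item Put $y_n := x_n\mathbf 1\{\norm{x_n}\le n^{1/p}\}$. Since $\sum_n \Pr\{\norm{x_1}^p>n\}\le \Ex\norm{x_1}^p<\infty$, Borel--Cantelli shows that $x_n=y_n$ for all but finitely many $n$ almost surely. Hence it suffices to prove $n^{-1/p}\sum_{i\le n} y_i\to0$.
\item Because $\Ex x_1=0$, we have $\norm{\Ex y_n}\le \Ex\bigl(\norm{x_1}\mathbf 1\{\norm{x_1}>n^{1/p}\}\bigr)\le n^{(1-p)/p}\,\Ex\bigl(\norm{x_1}^p\mathbf 1\{\norm{x_1}>n^{1/p}\}\bigr)$. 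A Cesàro/dominated convergence argument then gives $n^{-1/p}\sum_{i\le n}\norm{\Ex y_i}\to0$, so the centering is harmless.
\item For $z_n:=(y_n-\Ex y_n)/n^{1/p}$, symmetrize: replace $z_n$ by $z_n-z_n'$ with an independent copy, or by $\varepsilon_n z_n$ with Rademacher signs. Type $p$ then gives
\[
\Ex\norm{\sum_{n=N}^M z_n}^p\le C\sum_{n=N}^M \Ex\norm{z_n}^p .
\]
\item The usual layer-cake computation gives $\sum_n n^{-1}\Ex\bigl(\norm{x_1}^p\mathbf 1\{\norm{x_1}\le n^{1/p}\}\bigr)\le C'\,\Ex\norm{x_1}^p$. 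Hence the series $\sum z_n$ is Cauchy in $L^p$.
\item By the Itô--Nisio (or Lévy) theorem for sums of independent terms, the series converges almost surely.
\item Kronecker's lemma, applied with weights $n^{1/p}$, yields $n^{-1/p}\sum_{i\le n}(y_i-\Ex y_i)\to0$ almost surely.
\end{enumerate}

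For \emph{(2)$\Rightarrow$(1)} I would argue by contraposition. Suppose $\Xcal$ is not of type $p$. By the Maurey--Pisier theorem, $\ell_q^k$ are then uniformly (up to $1+\delta$) contained in $\Xcal$ for some $q\le p$. Using a $q$-stable-type (or ``one big coordinate per block'') construction inside nearly isometric copies of $\ell_q^{k_j}$ with rapidly increasing $k_j$, I would build a single symmetric random element $x$ with $\Ex\norm{x}^p<\infty$. Its i.i.d. copies would have partial sums $\sum_{i\le n}x_i$ whose norms are of order $n^{1/q}\ge n^{1/p}$ along a subsequence, with probability bounded below. A zero--one law then upgrades this to almost sure failure of (2).

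An alternative I would try first, because it avoids the Maurey--Pisier machinery, is a closed-graph style argument. Assume (2) holds for all i.i.d. centered $L^p$ elements. Via a Baire category argument on the space of such distributions, deduce a uniform bound
\[
\Ex\sup_n n^{-1/p}\norm{\sum_{i\le n}x_i}\le K\,(\Ex\norm{x_1}^p)^{1/p}.
\]
Then apply it to $x_1=\varepsilon\, v_J$, with $J$ uniform on $\{1,\dots,N\}$ and $v_1,\dots,v_N\in\Xcal$ fixed, to extract the Rademacher type-$p$ inequality.

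The main obstacle is this converse direction: producing from the failure of type $p$ an explicit i.i.d. counterexample with a finite $p$-th moment. The forward direction is routine once the truncation level $n^{1/p}$ is fixed.
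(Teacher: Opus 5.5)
The paper gives no proof of this statement; it is quoted from \cite{deAcosta1981, AzlarovVolodin1981}. Your proposal therefore has to stand on its own, and it has a genuine gap in the direction you call routine.

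\textbf{Step~4 of (1)$\Rightarrow$(2) is false.} Since $\Ex\bigl(\norm{x_1}^p\mathbf{1}\{\norm{x_1}\le n^{1/p}\}\bigr)\uparrow\Ex\norm{x_1}^p$, the terms of your series behave like $\Ex\norm{x_1}^p/n$, so the series diverges unless $x_1=0$ a.s. The layer-cake computation actually gives $\Ex\bigl(\norm{x_1}^p\sum_{n\ge\norm{x_1}^p}n^{-1}\bigr)=\infty$. Hence $\sum_n\Ex\norm{z_n}^p=\infty$, and Steps~5--6 have nothing to act on.

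The scalar Marcinkiewicz--Zygmund proof you are imitating works with the variance, i.e.\ an exponent $r=2>p$, for which $\sum_{n\ge m}n^{-r/p}\lesssim m^{1-r/p}$ and the bound closes. In a type-$p$ space no exponent above $p$ is available: $\ell_p$, $1<p<2$, has type $p$ but no type $r>p$. So the truncation--series--Kronecker scheme cannot be repaired by changing exponents. The proofs in the literature split the work into two parts, and type $p$ enters only in the first. Write $S_n=\sum_{i\le n}x_i$.
\begin{enumerate}
\item \emph{Convergence in $L^p$.} Approximate $x_1$ in $L^p$ by a centered finitely valued $w_1=\phi(x_1)$, and put $w_n=\phi(x_n)$. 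Type $p$ gives $\Ex\norm{\sum_{i\le n}(x_i-w_i)}^p\le Cn\,\Ex\norm{x_1-w_1}^p$. Meanwhile $\sum_{i\le n}w_i$ lives in a finite-dimensional space, where $\Ex\norm{\sum_{i\le n}w_i}^p=O(n^{p/2})=o(n)$. Hence $n^{-1}\Ex\norm{S_n}^p\to0$.
\item \emph{Upgrade to almost sure convergence.} For i.i.d.\ summands with $\Ex\norm{x_1}^p<\infty$, convergence in probability of $n^{-1/p}S_n$ implies almost sure convergence, in \emph{every} Banach space. This is the Banach-space Marcinkiewicz--Zygmund theorem, \cite[Ch.~7]{Ledoux-Talagrand-1991-probability}. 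De Acosta reaches it through moment inequalities for the real random variable $\norm{S_n}-\Ex\norm{S_n}$, which hold in any Banach space.
\end{enumerate}

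\textbf{The converse is only a sketch, and the Maurey--Pisier route fails in the borderline case.} A space may fail type $p$ while having type $r$ for all $r<p$ (e.g.\ suitable $\ell_2$-sums of $\ell_{p_k}^{n_k}$ with $p_k\uparrow p$). Then Maurey--Pisier supplies only $\ell_p^k$'s, i.e.\ $q=p$. But $\ell_p$ itself has type $p$, so (2) holds there. Any lower bound you extract from a block that is a $(1+\delta)$-copy of $\ell_p^k$ is therefore one valid in $\ell_p$: a block charged with probability $\pi$ by vectors of norm about $a$ contributes about $(\pi a^p n)^{1/p}$. Since $\sum\pi a^p\le\Ex\norm{x}^p<\infty$, this is $o(n^{1/p})$. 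So you must work with finite families $v_1,\dots,v_N$ along which the type-$p$ constant blows up, as your closed-graph route does.

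That route can be made to work, but not as stated. Three things are missing.
\begin{enumerate}
\item \emph{Linearity.} The class of distributions is not a vector space. Realize the sequence as $x_n=f(U_n)$, with $U_n$ i.i.d.\ uniform and $f$ a centered element of $L^p([0,1];\Xcal)$. Then $f\mapsto(n^{-1/p}S_n)_{n\in\N}$ is linear with closed graph into the F-space $L^0(\Omega;c_0(\Xcal))$.
\item \emph{What continuity gives.} The closed graph theorem yields only a bound in probability, $\Pr\bigl(\sup_n n^{-1/p}\norm{S_n}>K(\Ex\norm{x_1}^p)^{1/p}\bigr)\le\eta$, not the expectation bound you wrote.
\item \emph{Extracting Rademacher sums.} For $x_1=\varepsilon v_J$ the partial sums are $\sum_j\eta_jv_j$ with multinomial multiplicities, not $\sum_j\varepsilon_jv_j$. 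You need Poissonization (legitimate because of the $\sup_n$) to make the set of indices drawn exactly once an independent selector. Then the contraction principle passes to the Rademacher sum over that selector. Finally, Kahane's inequality together with Jensen over the selector converts the probability bound into $\Ex\norm{\sum_j\varepsilon_jv_j}\le C\bigl(\sum_j\norm{v_j}^p\bigr)^{1/p}$.
\end{enumerate}
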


So, if $\set{A_n}_{n \in \N}$ is a sequence of i.i.d. Bochner integrable random operators such that $\Ex\norm{A_n}^p < \infty$, and the closed linear span of the essential range of $A_1$ is of Rademacher type $p$, then almost surely $\frac{1}{n^{1/p}}\sum_{i=1}^n (A_i - \Ex A_i) \to 0$. By the standard argument, this implies $\frac{1}{n^{1/p}} \max_{k=1,\ldots,n} \norm{\sum_{i=1}^k (A_i - \Ex A_i)} \to 0$, which gives the estimate
\[
    \sup\limits_{t \in [0,T]} \norm{e^{A_1t/n} \ldots e^{A_{n}t/n} - e^{\Ex A_1t}} = o(n^{\frac{1}{p} - 1}).
\]
However, one should keep in mind that Rademacher type $p > 1$ is not automatic here unless the space is finite-dimensional. The essential range is necessarily separable due to Bochner measurability of $A_n$, but this does not imply non-trivial Rademacher type.

In practice, the assumptions of Theorem \ref{t:slln-norm} might be infeasible to verify directly. To obtain special cases of the theorem, one imposes additional restrictions on either the range of the random operators or their measure-theoretic properties. The former includes independent uniformly bounded operators whose essential ranges are contained in a common $B$-convex subspace or a compact subset of $\Lcal(\Xcal)$. The latter includes Corollary \ref{cor:slln-norm} and stationary ergodic sequences of random operators (see \cite[Section~3]{steele1989} for the corresponding ergodic theorem in Banach spaces). In general, there is a slightly stronger set of sufficient conditions:
\begin{gather*}
    \sup_{n \in \N} \frac{\sum_{i=1}^n \Ex \norm{A_i}}{n} < \infty, \\
    \frac{\sum_{i=1}^n (\norm{A_i} - \Ex \norm{A_i})}{n} \xrightarrow[]{a.s.} 0, \\
    \frac{\sum_{i=1}^n (A_i - \Ex A_i)}{n} \xrightarrow[]{a.s.} 0,
\end{gather*}
where the second condition could be derived from one of the SLLN-type theorems for random variables. However, as already mentioned, the SLLN in Banach spaces may fail even if the random elements are independent and uniformly bounded but not identically distributed.

For non-identically distributed random semigroups, moving from norm topology to SOT or WOT could prove beneficial. Example \ref{ex:sot} looks somewhat unnatural because it exploits very rare spikes of linear amplitude in the sequence. We believe that the SLLN in SOT and WOT can still be generalized under some stronger asymptotic assumptions. Moreover, it is reasonable to expect some connection between the SLLN for non-identically distributed random semigroups and the properties of the Banach space, as it exists in the case of the SLLN for random elements.

\section*{Acknowledgements}

The author is grateful to S.\,V. Dzhenzher for valuable discussion and review throughout the creation of this paper.